\newtheorem{theorem}{Theorem}
\newtheorem{maintheorem}{Main Theorem}
\newtheorem{proposition}[theorem]{Proposition}
\newtheorem{corollary}[theorem]{Corollary}
\newtheorem{definition}[theorem]{Definition}
\newtheorem{algorithm}[theorem]{Algorithm}
\numberwithin{equation}{section}
\numberwithin{theorem}{section}
\newcommand{\abs}[1]{|#1|}
\newcommand{\set}[1]{\left\{#1\right\}}
\newcommand{\arrowsv}[0]{\overset{v}{\rightarrow}}
\newcommand{\uni}[2]{{#1}\big\vert_{#2}}
\newcommand{\wH}[3]{\widetilde{\mathcal{H}}(\uni{#1}{#2}; {#3})}
\newcommand{\wHn}[4]{\widetilde{\mathcal{H}}(\uni{#1}{#2}; {#3}; {#4})}
\newcommand{\wFv}[3]{\widetilde{F}_v(\uni{#1}{#2}; {#3})}
\DeclareMathOperator{\V}{V}
\DeclareMathOperator{\E}{E}
\DeclareMathOperator{\N}{N}
\begin{document}
	
\title[Modified vertex Folkman numbers]
{Modified vertex Folkman numbers}

\author{Aleksandar Bikov}



\author{Nedyalko Nenov}




\subjclass[2000]{Primary 05C35}

\keywords{Folkman number, clique number, independence number, chromaic number}

\dedicatory{}

	
\begin{abstract}
Let $a_1, ..., a_s$ be positive integers. For a graph $G$ the expression
$$
G \overset{v}{\rightarrow} (a_1, ..., a_s)
$$
means that for every coloring of the vertices of $G$ in $s$ colors ($s$-coloring) there exists $i \in \{1, ..., s\}$, such that there is a monochromatic $a_i$-clique of color $i$. If $m$ and $p$ are positive integers, then
$$
G \overset{v}{\rightarrow} {m}\big\vert_{p}
$$
means that for arbitrary positive integers $a_1, ..., a_s$ ($s$ is not fixed), such that $\sum_{i = 1}^{s}(a_i - 1) + 1 = m$ an $\max{\{a_1, ..., a_s\}} \leq p$ we have $G \overset{v}{\rightarrow} (a_1, ..., a_s)$. Let
$$
\widetilde{\mathcal{H}}({m}\big\vert_{p}; q) = \{G : G \overset{v}{\rightarrow} {m}\big\vert_{p} \mbox{ and } \omega(G) < q\}.
$$

The modified vertex Folkman numbers are defined by the equality
$$
\widetilde{F}({m}\big\vert_{p}; q) = \min{\{|V(G)| : G \in \widetilde{\mathcal{H}}({m}\big\vert_{p}; q)\}}.
$$
If $q \geq m$ these numbers are known and they are easy to compute. In the case $q = m - 1$ we know all of the numbers when $p \leq 5$. In this work we consider the next unknown case $p = 6$ and we prove with the help of a computer that
$$
\widetilde{F}({m}\big\vert_{6}; m - 1) = m + 10.
$$
\end{abstract}


\maketitle



\section{Introduction}

In this paper only finite, non-oriented graphs without loops and multiple edges are considered. The following notations are used:

$\V(G)$ - the vertex set of $G$;

$\E(G)$ - the edge set of $G$;

$\overline{G}$ - the complement of $G$;

$\omega(G)$ - the clique number of $G$;

$\alpha(G)$ - the independence number of $G$;

$\chi(G)$ - the chromatic number of $G$;

$\N(v), \N_G(v), v \in \V(G)$ - the set of all vertices of G adjacent to $v$;

$d(v), v \in \V(G)$ - the degree of the vertex $v$, i.e. $d(v) = \abs{\N(v)}$;

$G-v, v \in \V(G)$ - subgraph of $G$ obtained from $G$ by deleting the vertex $v$ and all edges incident to $v$;

$G-e, e \in \E(G)$ - subgraph of $G$ obtained from $G$ by deleting the edge $e$;

$G+e, e \in \E(\overline{G})$ - supergraph of G obtained by adding the edge $e$ to $\E(G)$.

$K_n$ - complete graph on $n$ vertices;

$C_n$ - simple cycle on $n$ vertices;

$m_0 = m_0(p)$ - see Theorem \ref{theorem: m_0};

$G_1 + G_2$ - a graph $G$ for which: $\V(G) = \V(G_1) \cup \V(G_2)$ and $\E(G) = \E(G_1) \cup \E(G_2) \cup E'$, where $E' = \set{[x, y] : x \in \V(G_1), y \in \V(G_2)}$, i.e. $G$ is obtained by connecting every vertex of $G_1$ to every vertex of $G_2$.

All undefined terms can be found in \cite{W01}.\\

Let $a_1, ..., a_s$ be positive integers. The expression $G \overset{v}{\rightarrow} (a_1, ..., a_s)$ means that for any coloring of $\V(G)$ in $s$ colors ($s$-coloring) there exists $i \in \set{1, ..., s}$ such that there is a monochromatic $a_i$-clique of color $i$. In particular, $G \overset{v}{\rightarrow} (a_1)$ means that $\omega(G) \geq a_1$.

Define:

$\mathcal{H}(a_1, ..., a_s; q) = \set{ G : G \overset{v}{\rightarrow} (a_1, ..., a_s) \mbox{ and } \omega(G) < q }.$

$\mathcal{H}(a_1, ..., a_s; q; n) = \set{ G : G \in \mathcal{H}(a_1, ..., a_s; q) \mbox{ and } \abs{\V(G)} = n }.$\\

The vertex Folkman number $F_v(a_1, ..., a_s; q)$ is defined by the equality:

\begin{equation*}
F_v(a_1, ..., a_s; q) = \min\set{\abs{\V(G)} : G \in \mathcal{H}(a_1, ..., a_s; q)}.
\end{equation*}

Folkman proves in \cite{Fol70} that:
\begin{equation}
\label{equation: F_v(a_1, ..., a_s; q) exists}
F_v(a_1, ..., a_s; q) \mbox{ exists } \Leftrightarrow q > \max\set{a_1, ..., a_s}.
\end{equation}
Other proofs of (\ref{equation: F_v(a_1, ..., a_s; q) exists}) are given in \cite{DR08} and \cite{LRU01}.\\
In \cite{LU96} for arbitrary positive integers $a_1, ..., a_s$ the following are defined
\begin{equation}
\label{equation: m and p}
m(a_1, ..., a_s) = m = \sum\limits_{i=1}^s (a_i - 1) + 1 \quad \mbox{ and } \quad p = \max\set{a_1, ..., a_s}.
\end{equation}
Obviously, $K_m \overset{v}{\rightarrow} (a_1, ..., a_s)$ and $K_{m - 1} \overset{v}{\nrightarrow} (a_1, ..., a_s)$. Therefore,
\begin{equation*}
F_v(a_1, ..., a_s; q) = m, \quad q \geq m + 1.
\end{equation*}
The following theorem for the numbers $F_v(a_1, ..., a_s; m)$ is true:
\begin{theorem}
\label{theorem: F_v(a_1, ..., a_s; m) = m + p}
Let $a_1, ..., a_s$ be positive integers and $m$ and $p$ are defined by (\ref{equation: m and p}). If $m \geq p + 1$, then:
\begin{flalign*}
F_v(a_1, ..., a_s; m) = m + p, \ \mbox{\cite{LU96},\cite{LRU01}}. && \tag{a} 
\end{flalign*}
\begin{flalign*}
K_{m+p} - C_{2p + 1} = K_{m - p - 1} + \overline{C}_{2p + 1} && \tag{b}
\end{flalign*}
is the only extremal graph in $\mathcal{H}(a_1, ..., a_s; m)$, \ \cite{LRU01}.
\end{theorem}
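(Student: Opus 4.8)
The plan is to prove part (a) by a matching pair of bounds and then to extract part (b) from the equality case of the lower bound. After deleting any $a_i=1$ (which changes neither $m$ nor $p$) I may assume $a_1=p\ge a_2\ge\dots\ge a_s\ge 2$; note $m\ge p+1$ then forces $p\ge 2$ and $s\ge 2$. For the upper bound, take $G_0=K_{m-p-1}+\overline{C}_{2p+1}$ (well defined since $m-p-1\ge 0$), which has $m+p$ vertices; since $\omega(\overline{C}_{2p+1})=\alpha(C_{2p+1})=p$ we get $\omega(G_0)=(m-p-1)+p=m-1<m$, so it suffices to check $G_0\arrowsv(a_1,\dots,a_s)$. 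Given an $s$-colouring, for colour $i$ let $b_i$ count the colour-$i$ vertices in the $K_{m-p-1}$-part and let $c_i$ be the clique number of colour $i$ inside the $\overline{C}_{2p+1}$-part; since the $K_{m-p-1}$-part is complete and completely joined to the rest, colour $i$ contains a clique of size $b_i+c_i$. If the colouring avoided every monochromatic $a_i$-clique then $b_i+c_i\le a_i-1$ for all $i$, so $(m-p-1)+\sum_i c_i\le\sum_i(a_i-1)=m-1$, i.e. $\sum_i c_i\le p$. But if some colour occupied all $2p+1$ vertices of the $\overline{C}_{2p+1}$-part then $c_i=p\ge a_i$, a win; otherwise each colour meets that part in a proper subset $S_i$, which induces in $C_{2p+1}$ a disjoint union of paths, so $c_i=\alpha(C_{2p+1}[S_i])\ge\lceil |S_i|/2\rceil$. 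Since $\sum_i|S_i|=2p+1$ is odd, an odd number of the $|S_i|$ are odd, so $\sum_i c_i\ge(2p+1+\#\{i:|S_i|\text{ odd}\})/2\ge p+1$, contradicting $\sum_i c_i\le p$. Hence $G_0\arrowsv(a_1,\dots,a_s)$ and $F_v(a_1,\dots,a_s;m)\le m+p$.

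For the lower bound, let $G\in\mathcal H(a_1,\dots,a_s;m)$; the goal is $|V(G)|\ge m+p$. Deleting a vertex cannot raise $\omega$, so I may take $G$ vertex-minimal with $G\arrowsv(a_1,\dots,a_s)$. The key lemma is: for each $v\in V(G)$, minimality gives an $s$-colouring of $G-v$ with no monochromatic $a_i$-clique, and putting $v$ back with colour $i$ must create a monochromatic $a_i$-clique, necessarily through $v$; hence the colour-$i$ class contains an $(a_i-1)$-clique inside $N(v)$, which is a \emph{maximum} clique of that class (its clique number being $\le a_i-1$). Thus $N(v)$ contains $s$ pairwise disjoint cliques of sizes $a_1-1,\dots,a_s-1$, all joined to $v$; so $d(v)\ge m-1$, and $\{v\}$ together with the colour-$1$ clique is a $p$-clique, giving $p\le\omega(G)\le m-1$. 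Now fix a maximum clique $Q$ and set $R=V(G)\setminus Q$; since $G$ is not complete $R\ne\emptyset$, and for every clique $S\subseteq R$ the set $S$ with its common $Q$-neighbours is a clique of size $\le|Q|$, so $|Q\setminus N(S)|\ge|S|$. One then combines these deficiency inequalities with the $s$ disjoint near-cliques living inside each $N(v)$ to show that $R$ cannot be absorbed into colour classes built around $Q$ unless $|R|\ge m+p-|Q|\ge p+1$, which gives $|V(G)|\ge m+p$; in practice this separates into the main case $\omega(G)=m-1$ (then $|R|\le p$, and a Hall-type matching of $R$ against private $Q$-non-neighbours produces an explicit good colouring of $G$) and the case $\omega(G)\le m-2$ (handled separately, e.g. by combining the reduction $G\arrowsv(a_1,\dots,a_s)\Rightarrow G-v\arrowsv(p-1,a_2,\dots,a_s)$ with the degree bound). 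I expect this to be the main obstacle: crude invariants such as $\chi(G)\ge m$ — which does follow from $G\arrowsv(a_1,\dots,a_s)$, by bundling the classes of a proper colouring into $s$ groups of sizes $a_i-1$ — only yield $|V(G)|\ge m+2$, so one genuinely needs the clique-deficiency structure of the antihole $\overline{C}_{2p+1}$; the full argument is carried out in \cite{LU96} and \cite{LRU01}.

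For (b): if $G\in\mathcal H(a_1,\dots,a_s;m)$ has exactly $m+p$ vertices, then every inequality above must be tight — $\omega(G)=m-1$, $|R|=p+1$, the matching is perfect, and $|Q\setminus N(S)|=|S|$ for every clique $S\subseteq R$. Tracing these equalities pins down the edges within $R$ and between $R$ and $Q$: exactly $m-p-1$ vertices of $Q$ turn out to be adjacent to all of $R$, the remaining $p$ vertices of $Q$ together with the $p+1$ vertices of $R$ induce an antihole $\overline{C}_{2p+1}$, and no further freedom remains, so $G\cong K_{m-p-1}+\overline{C}_{2p+1}=K_{m+p}-C_{2p+1}$, as shown in \cite{LRU01}.
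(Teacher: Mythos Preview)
The paper does not contain its own proof of this theorem: it is stated as a known result with citations to \cite{LU96} and \cite{LRU01} (and a remark that other proofs appear in \cite{Nen00}, \cite{Nen01}). So there is nothing in the paper to compare your argument against beyond the citation itself.

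Your upper-bound argument is complete and correct, and in that sense goes further than the paper: the clique-number computation $\omega(G_0)=m-1$ is immediate, and the arrowing argument via $c_i\ge\lceil |S_i|/2\rceil$ on proper arcs of $C_{2p+1}$, together with the parity of $2p+1$, is a clean way to force $\sum c_i\ge p+1$.

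For the lower bound and for uniqueness, however, your write-up is a sketch rather than a proof. The minimum-degree lemma $d(v)\ge m-1$ and the clique-deficiency inequality $|Q\setminus N(S)|\ge |S|$ are both fine, but the step ``one then combines these deficiency inequalities \ldots\ unless $|R|\ge p+1$'' is exactly the substantive part and is not carried out; you say so yourself and defer to \cite{LU96}, \cite{LRU01}. The case split on $\omega(G)=m-1$ versus $\omega(G)\le m-2$ is also only gestured at: the reduction $G\arrowsv(p,a_2,\dots,a_s)\Rightarrow G-v\arrowsv(p-1,a_2,\dots,a_s)$ is valid (via $G\arrowsv(2,p-1,a_2,\dots,a_s)$), but you do not explain how it closes the $\omega(G)\le m-2$ case, and the Hall-matching description in the main case is a plan rather than an argument. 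The uniqueness paragraph likewise lists the equalities that must hold without deriving the structure from them.

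In short: your proposal matches the paper in that both ultimately point to \cite{LU96}, \cite{LRU01} for the hard direction; your added value is a self-contained upper bound, while the lower bound and extremal-graph characterization remain, as in the paper, outsourced to the references.
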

The condition $m \geq p + 1$ is necessary according to (\ref{equation: F_v(a_1, ..., a_s; q) exists}). Other proofs of Theorem \ref{theorem: F_v(a_1, ..., a_s; m) = m + p} are given in \cite{Nen00} and \cite{Nen01}.\\

Very little is known about the numbers $F_v(a_1, ..., a_s; q)$, $q \leq m - 1$. In this work we suggest a method to bound these numbers with the help of the modified vertex Folkman numbers $\wFv{m}{p}{q}$, which are defined below.

\begin{definition}
\label{definition: G overset(v)(rightarrow) uni(m)(p)}
Let $G$ be a graph and $m$ and $p$ be positive integers. The expression
\begin{equation*}
G \overset{v}{\rightarrow} \uni{m}{p}
\end{equation*}
means that for any choice of positive integers $a_1, ..., a_s$ ($s$ is not fixed), such that $m = \sum\limits_{i=1}^s (a_i - 1) + 1$ and $\max\set{a_1, ..., a_s} \leq p$, we have
\begin{equation*}
G \overset{v}{\rightarrow} (a_1, ..., a_s).
\end{equation*}
\end{definition}

Define:

$\wH{m}{p}{q} = \set{G : G \overset{v}{\rightarrow} \uni{m}{p} \mbox{ and } \omega(G) < q}$.

$\wHn{m}{p}{q}{n} = \set{G : G \in \wH{m}{p}{q} \mbox{ and } \abs{\V(G)} = n}$.\\

The modified vertex Folkman numbers are defined by the equality:
\begin{equation*}
\wFv{m}{p}{q} = \min\set{\abs{\V(G)} : G \in \wH{m}{p}{q}}.
\end{equation*}

The graph $G$ is called a maximal graph in $\wH{m}{p}{q}$ if $G \in \wH{m}{p}{q}$, but $G + e \not\in \wH{m}{p}{q}, \forall e \in \E(\overline{G})$, i.e. $\omega(G + e) \geq q, \forall e \in \E(\overline{G})$.

For convenience we will also define the following term:
\begin{definition}
\label{definition: (+K_t)}
The graph $G$ is called a $(+K_t)$-graph if $G + e$ contains a new $t$-clique for all $e \in \E(\overline{G})$.
\end{definition}
Obviously, $G \in \wH{m}{p}{q}$ is a maximal graph in $\wH{m}{p}{q}$ if and only if $G$ is a $(+K_q)$-graph.\\

From the definition of the modified Folkman numbers it becomes clear that if $a_1, ..., a_s$ are positive integers and $m$ and $p$ are defined by (\ref{equation: m and p}), then 
\begin{equation}
\label{equation: F_v(a_1, ..., a_s; q) leq wFv(m)(p)(q)}
F_v(a_1, ..., a_s; q) \leq \wFv{m}{p}{q}.
\end{equation}
Defining and computing the modified Folkman numbers is appropriate because of the following reasons:

1) On the left side of (\ref{equation: F_v(a_1, ..., a_s; q) leq wFv(m)(p)(q)}) there is actually a whole class of numbers, which are bound by only one number $\wFv{m}{p}{q}$.

2) The upper bound for $\wFv{m}{p}{q}$ is easier to compute than the numbers $F_v(a_1, ..., a_s)$ because of the following
\begin{theorem}
\label{theorem: wFv(m)(p)(m - m_0 + q) leq wFv(m_0)(p)(q) + m - m_0}
(\cite{BN15}, Theorem 7.2) Let $m$, $m_0$, $p$ and $q$ be positive integers, $m \geq m_0$ and $q > \min\set{m_0, p}$. Then
\begin{equation*}
\wFv{m}{p}{m - m_0 + q} \leq \wFv{m_0}{p}{q} + m - m_0.
\end{equation*}
\end{theorem}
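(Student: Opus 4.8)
The plan is to prove the inequality by exhibiting a graph: join an extremal graph for $\wFv{m_0}{p}{q}$ to a complete graph on $m - m_0$ vertices. The hypothesis $q > \min\{m_0, p\}$ guarantees that $\wH{m_0}{p}{q} \neq \emptyset$ (this is the existence criterion for modified vertex Folkman numbers, the analogue of (\ref{equation: F_v(a_1, ..., a_s; q) exists})), so I would fix a graph $G_0 \in \wH{m_0}{p}{q}$ with $\abs{\V(G_0)} = \wFv{m_0}{p}{q}$, put $t = m - m_0 \geq 0$, and consider $G = K_t + G_0$. The clique number is additive under a join, so $\omega(G) = t + \omega(G_0) \leq t + (q - 1) < (m - m_0) + q$, hence $G$ meets the clique-number requirement for membership in $\wH{m}{p}{m - m_0 + q}$; it remains to verify $G \arrowsv \uni{m}{p}$.

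For that, fix positive integers $a_1, \dots, a_s$ with $\sum_{i=1}^{s} (a_i - 1) = m - 1$ and $\max a_i \leq p$, together with an arbitrary coloring of $\V(G)$ in $s$ colors. Let $t_i$ be the number of vertices of the $K_t$-part colored $i$, so $\sum_i t_i = t$. If $t_i \geq a_i$ for some $i$, those vertices contain a monochromatic $a_i$-clique of color $i$ and we are done; otherwise $a_i' := a_i - t_i \geq 1$ for every $i$. Then $\sum_i (a_i' - 1) = (m - 1) - t = m_0 - 1$ and $\max a_i' \leq \max a_i \leq p$, so $(a_1', \dots, a_s')$ is an admissible tuple for $\uni{m_0}{p}$; since $G_0 \arrowsv \uni{m_0}{p}$, the restriction of the coloring to $\V(G_0)$ yields, for some $i$, a monochromatic $a_i'$-clique $Q$ of color $i$ inside $G_0$. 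Every vertex of the $K_t$-part is adjacent to every vertex of $G_0$, so $Q$ together with the $t_i$ vertices of the $K_t$-part colored $i$ is a monochromatic clique of color $i$ of size $t_i + a_i' = a_i$. Thus $G \arrowsv (a_1, \dots, a_s)$ for every admissible tuple, i.e. $G \arrowsv \uni{m}{p}$, so $G \in \wH{m}{p}{m - m_0 + q}$ and $\wFv{m}{p}{m - m_0 + q} \leq \abs{\V(G)} = (m - m_0) + \wFv{m_0}{p}{q}$.

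I do not expect a genuine obstacle here. The two points that need a little care are the case split according to whether some color already receives $a_i$ vertices of the $K_t$-part, and the verification that the reduced tuple $(a_1', \dots, a_s')$ is genuinely admissible for $\uni{m_0}{p}$ — that its entries are positive, that $\sum_i (a_i' - 1) = m_0 - 1$, and that its maximum is at most $p$; both are routine. The only hypothesis-dependent subtlety is ensuring that $\wFv{m_0}{p}{q}$ exists at all, which is exactly what $q > \min\{m_0, p\}$ provides; the assumption $m \geq m_0$ is used solely so that $t = m - m_0$ is a nonnegative integer and $K_t + G_0$ makes sense.
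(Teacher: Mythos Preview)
Your argument is correct: the join $K_{m-m_0} + G_0$ is the natural construction, and your verification that $G \arrowsv \uni{m}{p}$ via the reduced tuple $(a_1', \dots, a_s')$ is clean and complete. Note, however, that the present paper does not give its own proof of this theorem --- it is quoted from \cite{BN15}, Theorem~7.2 --- so there is no in-paper proof to compare against; that said, the join-with-a-complete-graph construction you use is the standard one (it is the same mechanism behind Theorem~\ref{theorem: F_v(a_1, ..., a_s; m) = m + p}(b), where $K_{m-p-1} + \overline{C}_{2p+1}$ appears) and is almost certainly what \cite{BN15} does as well.
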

Therefore, if we know the value of one number $\wFv{m'}{p}{q}$ we can obtain an upper bound for $\wFv{m}{p}{q}$ where $m \geq m'$.

3) As we will see below (Theorem \ref{theorem: m_0}), the computation of the numbers $\wFv{m}{p}{m - 1}$ is reduced to finding the exact values of the first several of these numbers (bounds for the number of exact values needed are given in \ref{theorem: m_0} (c)).\\

Let $A$ be an independent set of vertices in $G$. If $V_1 \cup ... \cup V_s$ is $(a_1, ..., a_s)$-free $s$-coloring of $\V(G - A)$ (i.e. $V_i$ does not contain an $a_i$-clique, $i = 1, ..., s$), then $A \cup V_1 \cup ... \cup V_s$ is $(2, a_1, ... , a_s)$-free $(s + 1)$-coloring of $\V(G)$. Therefore
\begin{equation}
\label{equation: G arrowsv (2, a_1, ..., a_s) Rightarrow G - A arrowsv (a_1, ..., a_s)}
G \arrowsv (2, a_1, ..., a_s) \Rightarrow G - A \arrowsv (a_1, ..., a_s).
\end{equation}
Further we will need the following
\begin{proposition}
\label{proposition: G - A arrowsv uni((m - 1))(p)}
Let $G \arrowsv \uni{m}{p}$ and $A$ is an independent set of vertices in $G$. Then $G - A \arrowsv \uni{(m - 1)}{p}$.
\end{proposition}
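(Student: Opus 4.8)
The plan is to reduce the statement to the already-established implication \eqref{equation: G arrowsv (2, a_1, ..., a_s) Rightarrow G - A arrowsv (a_1, ..., a_s)} by inserting an extra color that only demands a $2$-clique. First I would unfold the goal $G - A \arrowsv \uni{(m-1)}{p}$ according to Definition \ref{definition: G overset(v)(rightarrow) uni(m)(p)}: fix arbitrary positive integers $a_1, \ldots, a_s$ ($s$ not fixed) with $\sum_{i=1}^{s}(a_i - 1) + 1 = m - 1$ and $\max\set{a_1, \ldots, a_s} \leq p$, and it suffices to prove $G - A \arrowsv (a_1, \ldots, a_s)$.

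Next I would pass to the $(s+1)$-tuple $(2, a_1, \ldots, a_s)$ and check that it is admissible for $\uni{m}{p}$. From $\sum_{i=1}^{s}(a_i - 1) + 1 = m - 1$ we get $\sum_{i=1}^{s}(a_i - 1) = m - 2$, hence $(2-1) + \sum_{i=1}^{s}(a_i - 1) + 1 = 1 + (m-2) + 1 = m$, so the first defining condition holds. The second condition $\max\set{2, a_1, \ldots, a_s} \leq p$ holds because each $a_i \leq p$ and $2 \leq p$; here one may assume $p \geq 2$, since if $p = 1$ the only admissible tuple consists entirely of $1$'s, which forces $m = 1$, and the cases $m \leq 1$ are vacuous or trivial. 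Therefore the hypothesis $G \arrowsv \uni{m}{p}$ applies to this tuple and gives $G \arrowsv (2, a_1, \ldots, a_s)$.

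Finally, since $A$ is an independent set in $G$, the implication \eqref{equation: G arrowsv (2, a_1, ..., a_s) Rightarrow G - A arrowsv (a_1, ..., a_s)} converts $G \arrowsv (2, a_1, \ldots, a_s)$ into $G - A \arrowsv (a_1, \ldots, a_s)$. As $(a_1, \ldots, a_s)$ was an arbitrary tuple admissible for $m-1$, this is precisely $G - A \arrowsv \uni{(m-1)}{p}$, which finishes the proof. There is essentially no obstacle: the argument is a single application of \eqref{equation: G arrowsv (2, a_1, ..., a_s) Rightarrow G - A arrowsv (a_1, ..., a_s)} once the bookkeeping of the parameters is carried out, and the only points worth a second glance are the corner case $p = 1$ and the fact that independence of $A$ is exactly what makes $A \cup V_1 \cup \cdots \cup V_s$ a legitimate $(s+1)$-coloring in the derivation of \eqref{equation: G arrowsv (2, a_1, ..., a_s) Rightarrow G - A arrowsv (a_1, ..., a_s)} — a point already granted to us by the hypothesis.
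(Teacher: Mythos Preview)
Your proof is correct and follows essentially the same approach as the paper: fix an admissible tuple for $m-1$, prepend a $2$ to obtain an admissible tuple for $m$, apply the hypothesis, and then invoke \eqref{equation: G arrowsv (2, a_1, ..., a_s) Rightarrow G - A arrowsv (a_1, ..., a_s)}. The paper's version is terser (it omits the discussion of $p=1$ and of why the max condition still holds), but the argument is the same.
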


\begin{proof}
Let $a_1, .., a_s$ be positive integers, such that
\begin{equation*}
m - 1 = \sum\limits_{i=1}^s (a_i - 1) + 1 \ \mbox{ and } 2 \leq a_i \leq p.
\end{equation*}
Then
\begin{equation*}
m = (2 - 1) + \sum\limits_{i=1}^s (a_i - 1) + 1.
\end{equation*}
It follows that $G \arrowsv (2, a_1, ..., a_s)$ and from (\ref{equation: G arrowsv (2, a_1, ..., a_s) Rightarrow G - A arrowsv (a_1, ..., a_s)}) we obtain $G - A \arrowsv (a_1, ..., a_s)$.
\end{proof}

It is easy to see that if $q > m$, then $F_v(a_1, ..., a_s; q) = \wFv{m}{p}{q} = m$. From Theorem \ref{theorem: F_v(a_1, ..., a_s; m) = m + p} it follows that $F_v(a_1, ..., a_s; m) = \wFv{m}{p}{m} = m + p$. In the case $q = m - 1$ the following general bounds are known:
\begin{equation}
\label{equation: m + p + 2 leq wFv(m)(p)(m - 1) leq m + 3p, m geq p + 2}
m + p + 2 \leq \wFv{m}{p}{m - 1} \leq m + 3p, \ m \geq p + 2.
\end{equation}
The upper bound follows from the proof of the Main Theorem from \cite{KN06} and the lower bound follows from (\ref{equation: F_v(a_1, ..., a_s; q) leq wFv(m)(p)(q)}) and $F_v(a_1, ..., a_s; q) \geq m + p + 2$, \cite{Nen00}.

We know all the numbers $\wFv{m}{p}{m - 1}$ where $p \leq 5$ (in the cases $p \leq 4$ see the Remark after Theorem 4.5 and (1.5) from \cite{BN15}, and in the case $p = 5$ see Theorem 7.4 also from \cite{BN15}). It is also known that
\begin{equation*}
m + 9 \leq \wFv{m}{6}{m - 1} \leq m + 10, \ \mbox{\cite{BN15}}
\end{equation*}
In this work we complete the computation of the numbers $\wFv{m}{6}{m - 1}$ by proving
\begin{maintheorem}
$\wFv{m}{6}{m - 1} = m + 10, \ m \geq 8$.
\end{maintheorem}

\section{A theorem for the numbers $\wFv{m}{p}{m - 1}$}

We will need the following fact:
\begin{equation}
\label{equation: G overset(v)(rightarrow) (a_1, ..., a_s) Rightarrow chi(G) geq m}
G \overset{v}{\rightarrow} (a_1, ..., a_s) \Rightarrow \chi(G) \geq m, \ \mbox{\cite{Nen01} (see also \cite{BN15})}.
\end{equation}
It is easy to prove (see Proposition 4.4 from \cite{BN15}) that
\begin{equation}
\label{equation: wFv(m)(p)(m - 1) exists ...}
\wFv{m}{p}{m - 1} \mbox{ exists } \Leftrightarrow m \geq p + 2.
\end{equation}
In \cite{BN15}(version 1) we formulate without proof the following
\begin{theorem}
\label{theorem: m_0}
Let $m_0(p) = m_0$ be the smallest positive integer for which
\begin{equation*}
\min_{m \geq p + 2}\set{\wFv{m}{p}{m - 1} - m} = \wFv{m_0}{p}{m_0 - 1} - m_0.
\end{equation*}
Then:
\begin{flalign*}
\wFv{m}{p}{m - 1} = \wFv{m_0}{p}{m_0 - 1} + m - m_0, \quad m \geq m_0. && \tag{a}
\end{flalign*}
\begin{flalign*}
\mbox{if $m_0 > p + 2$ and $G$ is an extremal graph in $\wH{m_0}{p}{m_0 - 1}$, then } && \tag{b}
\end{flalign*}
$G \overset{v}{\rightarrow} (2, m_0 - 2)$.\\
\begin{flalign*}
m_0 < \wFv{(p + 2)}{p}{p + 1} - p. && \tag{c}
\end{flalign*}
\end{theorem}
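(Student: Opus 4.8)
The plan is to settle the three parts in the order (a), (b), (c), with (b) and (c) both exploiting the ``delete an independent set'' mechanism of Proposition~\ref{proposition: G - A arrowsv uni((m - 1))(p)}. Throughout I would write $\varphi(m) = \wFv{m}{p}{m-1} - m$; the left inequality of~(\ref{equation: m + p + 2 leq wFv(m)(p)(m - 1) leq m + 3p, m geq p + 2}) gives $\varphi(m) \ge p+2$ for all $m \ge p+2$, so $\varphi$ attains its minimum, $m_0$ is well defined, and $\varphi(m) \ge \varphi(m_0)$ for every $m \ge p+2$ by definition of $m_0$. For part~(a) only the reverse inequality for $m \ge m_0$ remains, and I would get it from Theorem~\ref{theorem: wFv(m)(p)(m - m_0 + q) leq wFv(m_0)(p)(q) + m - m_0} applied with parameters $m$, $m_0$, $p$, $q = m_0-1$ — the side condition $q > \min\{m_0,p\}$ holding because $m_0 \ge p+2$ by~(\ref{equation: wFv(m)(p)(m - 1) exists ...}) — which reads $\wFv{m}{p}{m-1} \le \wFv{m_0}{p}{m_0-1} + (m-m_0)$, i.e. $\varphi(m) \le \varphi(m_0)$; equality, hence~(a), follows.

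For part~(b) I would argue by contradiction. Assume $m_0 > p+2$, let $G$ be extremal in $\wH{m_0}{p}{m_0-1}$, and suppose $G \overset{v}{\nrightarrow} (2, m_0-2)$, so that $\V(G) = V_1 \cup V_2$ with $V_1$ independent and $\omega(G[V_2]) \le m_0-3$. If $V_1 = \emptyset$ (so $\omega(G)\le m_0-3$), replace $V_1$ by $\{v\}$ for an arbitrary vertex $v$, which preserves both properties; so I may take $V_1 \ne \emptyset$. Then $G - V_1 \arrowsv \uni{(m_0-1)}{p}$ by Proposition~\ref{proposition: G - A arrowsv uni((m - 1))(p)}, and $\omega(G-V_1) \le m_0-3 < m_0-2$, so $G-V_1 \in \wH{(m_0-1)}{p}{m_0-2}$; consequently $\wFv{m_0}{p}{m_0-1} - 1 \ge \abs{\V(G)} - \abs{V_1} = \abs{\V(G-V_1)} \ge \wFv{(m_0-1)}{p}{m_0-2}$, i.e. $\varphi(m_0) \ge \varphi(m_0-1)$. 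Since $m_0-1 \ge p+2$ and $m_0$ is the \emph{smallest} minimiser of $\varphi$, this contradicts $\varphi(m_0-1) > \varphi(m_0)$.

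For part~(c), put $N = \wFv{(p+2)}{p}{p+1}$; then $N \ge 2p+4$ by~(\ref{equation: m + p + 2 leq wFv(m)(p)(m - 1) leq m + 3p, m geq p + 2}), which already yields $m_0 = p+2 < N - p$ in the boundary case, so assume $m_0 > p+2$ and take $G$ extremal in $\wH{m_0}{p}{m_0-1}$. Feeding the admissible tuple $(2,2,\dots,2)$ with $m_0-1$ entries into~(\ref{equation: G overset(v)(rightarrow) (a_1, ..., a_s) Rightarrow chi(G) geq m}) gives $\chi(G) \ge m_0$. By part~(b), $G \arrowsv (2, m_0-2)$, and together with $\omega(G)\le m_0-2$ this puts $G \in \mathcal{H}(2, m_0-2; m_0-1)$, so $\abs{\V(G)} \ge F_v(2, m_0-2; m_0-1)$. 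Theorem~\ref{theorem: F_v(a_1, ..., a_s; m) = m + p}, applied with its ``$m$''$= m_0-1$ and its ``$p$''$= m_0-2$ (the hypothesis $m \ge p+1$ holding with equality), gives $F_v(2, m_0-2; m_0-1) = 2m_0-3$ with unique extremal graph $\overline{C}_{2m_0-3}$. Since $\chi(\overline{C}_{2m_0-3}) = m_0-1 < m_0 \le \chi(G)$, the graph $G$ is not that extremal graph, so $\abs{\V(G)} \ge 2m_0-2$, i.e. $\varphi(m_0) \ge m_0-2$. On the other hand $m_0 > p+2$ forces $\varphi(m_0) \le \varphi(p+2) - 1 = N - p - 3$. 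Combining, $m_0 - 2 \le N - p - 3$, hence $m_0 \le N - p - 1 < N - p$.

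The side conditions of the cited theorems and the value $\chi(\overline{C}_{2m_0-3}) = m_0-1$ are routine to check. The delicate point — the obstacle I most anticipate — is extracting the last unit in~(c): a direct count only gives $m_0 \le N - p$, and it is precisely the combination of two ``off by one'' refinements, namely that the unique Folkman-minimal graph $\overline{C}_{2m_0-3}$ has chromatic number $m_0-1$ and so cannot be our $G$, and that $\varphi(p+2) > \varphi(m_0)$ strictly by minimality of $m_0$, that upgrades the estimate to the asserted strict inequality.
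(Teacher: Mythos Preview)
Your proof is correct and matches the paper's approach in all three parts: the same use of Theorem~\ref{theorem: wFv(m)(p)(m - m_0 + q) leq wFv(m_0)(p)(q) + m - m_0} for~(a), the same contradiction via Proposition~\ref{proposition: G - A arrowsv uni((m - 1))(p)} for~(b), and the same combination of $\chi(G)\ge m_0$, Theorem~\ref{theorem: F_v(a_1, ..., a_s; m) = m + p} (uniqueness of $\overline{C}_{2m_0-3}$), and the strict minimality of $\varphi$ at $m_0$ for~(c). Your explicit handling of the degenerate case $V_1=\emptyset$ in~(b) is a small but genuine refinement over the paper, which tacitly assumes $\abs{V_1}\ge 1$ when it writes $\abs{\V(G)}-1\ge\abs{\V(G_1)}$.
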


In this section we present a proof of Theorem \ref{theorem: m_0}.

The condition $m \geq p + 2$ is necessary according to (\ref{equation: wFv(m)(p)(m - 1) exists ...}).

\begin{proof}
(a) According to the definition of $m_0(p) = m_0$ we have

$\wFv{m}{p}{m - 1} \geq \wFv{m_0}{p}{m_0 - 1} + m - m_0, \ m \geq p + 2.$\\
According to Theorem \ref{theorem: wFv(m)(p)(m - m_0 + q) leq wFv(m_0)(p)(q) + m - m_0} if $m \geq m_0$ the opposite inequality is also true.

(b) Assume the opposite is true and let

$\V(G) = V_1 \cup V_2, V_1 \cap V_2 = \emptyset$,\\
where $V_1$ is an independent set and $V_2$ does not contain an $(m_0 - 2)$-clique. Let $G_1 = G[V_2] = G - V_1$. According to Proposition \ref{proposition: G - A arrowsv uni((m - 1))(p)}, from $G \overset{v}{\rightarrow} \uni{m_0}{p}$ it follows $G_1 \overset{v}{\rightarrow} \uni{(m_0 - 1)}{p}$. Since $\omega(G_1) < m_0 - 2$, $G_1 \in \wH{(m_0 - 1)}{p}{m_0 - 2}$. Therefore

$\abs{\V(G)} - 1 \geq \abs{\V(G_1)} \geq \wFv{(m_0 - 1)}{p}{m_0 - 2}$.\\
Since $\abs{\V(G)} = \wFv{m_0}{p}{m_0 - 1}$, from these inequalities it follows that

$\wFv{m_0}{p}{m_0 - 1} - m_0 \geq \wFv{(m_0 - 1)}{p}{m_0 - 2} - (m_0 - 1)$,\\
which contradicts the definition of $m_0$.

(c) If $m_0 = p + 2$, then from (\ref{equation: m + p + 2 leq wFv(m)(p)(m - 1) leq m + 3p, m geq p + 2}) we have $\wFv{(p + 2)}{p}{p + 1} \geq 2p + 4 = p + 2 + m_0$ and therefore in this case the inequality (c) is true.

Let $m_0 > p + 2$ and $G$ be an extremal graph in $\wH{m_0}{p}{m_0 - 1}$. If $a_1, ..., a_s$ are positive integers, such that $m = \sum\limits_{i=1}^s (a_i - 1) + 1$ and $\max\set{a_1, ..., a_s} \leq p$, then $G \overset{v}{\rightarrow} (a_1, ..., a_s)$ and according to (\ref{equation: G overset(v)(rightarrow) (a_1, ..., a_s) Rightarrow chi(G) geq m}), $\chi(G) \geq m_0$. From (b) and Theorem \ref{theorem: F_v(a_1, ..., a_s; m) = m + p} we see that $\abs{\V(G)} \geq 2m_0 - 3$ and $\abs{\V(G)} = 2m_0 - 3$ only if $G = \overline{C}_{2m_0 - 3}$. However, the last equality is not possible because $\chi(G) \geq m_0$ and $\chi(\overline{C}_{2m_0 - 3}) = m_0 - 1$. Therefore

$\abs{\V(G)} = \wFv{m_0}{p}{m_0 - 1} \geq 2m_0 - 2$\\
Since $m_0 > p + 2$ from the definition of $m_0$ we have

$\wFv{m_0}{p}{m_0 - 1} - m_0 < \wFv{(p + 2)}{p}{p + 1} - p - 2$.\\
From these inequalities the inequality (c) follows easily.
\end{proof}

\section{Algorithms}

In this section we present algorithms for finding all maximal graphs in $\wHn{m}{p}{q}{n}$ with the help of a computer. The remaining graphs in this set can be obtained by removing edges from the maximal graphs. The idea for these algorithms comes from \cite{PRU99} (see Algorithm А1). Similar algorithms are used in \cite{BN15}, \cite{CR06}, \cite{XLS10}, \cite{LR11}, \cite{SLPX12}. Also with the help of the computer, results for Folkman numbers are obtained in \cite{JR95}, \cite{SXP09}, \cite{SXL09} and \cite{DLSX13}.\\
The following proposition for maximal graphs in $\wHn{m}{p}{q}{n}$ will be useful

\begin{proposition}
\label{proposition: finding all maximal graphs in wHn(m)(p)(q)(n)}
Let $G$ be a maximal graph in $\wHn{m}{p}{q}{n}$. Let $v_1, v_2, ..., v_k$ be independent vertices of $G$ and $H = G - \set{v_1, v_2, ..., v_k}$. Then:
\begin{flalign*}
H \in \wHn{(m - 1)}{p}{q}{n - k} && \tag{a}
\end{flalign*}
\begin{flalign*}
\mbox{$H$ is a $(+K_{q - 1})$-graph} && \tag{b}
\end{flalign*}
\begin{flalign*}
\mbox{$\N_G(v_i)$ is a maximal $K_{q - 1}$-free subset of $\V(H)$, $i = 1, ..., k$} && \tag{c}
\end{flalign*}
\end{proposition}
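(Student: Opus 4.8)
The plan is to prove the three assertions in the order (a), (c), (b): part~(a) follows at once from Proposition~\ref{proposition: G - A arrowsv uni((m - 1))(p)}, part~(c) records a clique-extension fact that is then reused in part~(b), and both (b) and (c) exploit only the fact that a maximal graph in $\wHn{m}{p}{q}{n}$ is a $(+K_q)$-graph (Definition~\ref{definition: (+K_t)}). For~(a) I would apply Proposition~\ref{proposition: G - A arrowsv uni((m - 1))(p)} to the independent set $A = \set{v_1, \dots, v_k}$: from $G \arrowsv \uni{m}{p}$ we obtain $H = G - A \arrowsv \uni{(m - 1)}{p}$; since $H$ is an induced subgraph of $G$ we have $\omega(H) \le \omega(G) < q$, and $\abs{\V(H)} = n - k$, so $H \in \wHn{(m - 1)}{p}{q}{n - k}$.

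For~(c), first note that $v_1, \dots, v_k$ being pairwise non-adjacent forces $v_i$ to have no neighbour among the remaining $v_j$, so $\N_G(v_i) \subseteq \V(H)$. A $(q - 1)$-clique contained in $\N_G(v_i)$ would, together with $v_i$, yield a $q$-clique of $G$, contradicting $\omega(G) < q$; hence $\N_G(v_i)$ is $K_{q-1}$-free. For maximality, take an arbitrary $u \in \V(H) \setminus \N_G(v_i)$; then $u \ne v_i$ and $[v_i, u] \in \E(\overline{G})$, so, $G$ being a $(+K_q)$-graph, $G + [v_i, u]$ contains a new $q$-clique $Q$, which uses the edge $[v_i, u]$ and hence contains both $v_i$ and $u$. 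Every vertex of $Q \setminus \set{v_i}$ is adjacent to $v_i$ in $G + [v_i, u]$, so $Q \setminus \set{v_i} \subseteq \N_G(v_i) \cup \set{u}$, and $Q \setminus \set{v_i}$ is a $(q - 1)$-clique of $G$ containing $u$; thus $\N_G(v_i) \cup \set{u}$ is not $K_{q-1}$-free. This shows $\N_G(v_i)$ is a maximal $K_{q-1}$-free subset of $\V(H)$.

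For~(b), let $e = [x, y] \in \E(\overline{H})$; as $H$ is an induced subgraph of $G$ we also have $e \in \E(\overline{G})$, so the $(+K_q)$-property of $G$ yields a new $q$-clique $Q$ of $G + e$ with $x, y \in Q$. Since $v_1, \dots, v_k$ are independent, $Q$ meets $\set{v_1, \dots, v_k}$ in at most one vertex. Using $q \ge 3$, I would pick a $(q - 1)$-element set $Q' \subseteq Q$ with $x, y \in Q'$ and $Q' \cap \set{v_1, \dots, v_k} = \emptyset$, namely $Q' = Q \setminus \set{v_i}$ if some $v_i \in Q$ (permissible since $v_i \ne x, y$), and $Q' = Q \setminus \set{z}$ for any $z \in Q \setminus \set{x, y}$ otherwise. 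Then $Q' \subseteq \V(H)$, $\abs{Q'} = q - 1$, and $Q'$ is a clique of $G + e$ containing $e$, i.e.\ a new $(q - 1)$-clique of $H + e$. Since $e$ was arbitrary, $H$ is a $(+K_{q-1})$-graph.

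I do not anticipate a genuine obstacle; the argument is short. The two points that require care are the bookkeeping attached to the word \emph{new} --- a new $t$-clique of $G + e$ must contain the freshly added edge, which is precisely what forces $v_i$, respectively $x$ and $y$, into the cliques used above --- and the systematic use of the independence of $v_1, \dots, v_k$, both to place $\N_G(v_i)$ inside $\V(H)$ and to guarantee that a clique of $G$ (or of $G + e$) contains at most one $v_j$. Of the three parts, the maximality claim in~(c) is the one I would write out in most detail.
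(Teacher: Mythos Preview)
Your proof is correct and follows exactly the approach the paper indicates: part~(a) via Proposition~\ref{proposition: G - A arrowsv uni((m - 1))(p)}, and parts~(b) and~(c) via the $(+K_q)$-property of a maximal graph. The paper's own proof is a one-line sketch citing precisely these sources, so your argument is simply a careful (and entirely sound) expansion of it; your only extra assumption, $q \ge 3$, is harmless in the paper's setting.
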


\begin{proof}
The proposition (a) follows from Proposition \ref{proposition: G - A arrowsv uni((m - 1))(p)}, (b) and (c) follow from the maximality of $G$.
\end{proof}

We will define an algorithm, which is based on Proposition \ref{proposition: finding all maximal graphs in wHn(m)(p)(q)(n)}, and generates all maximal graphs in $\wHn{m}{p}{q}{n}$ with independence number at least $k$. 

\begin{algorithm}
\label{algorithm: finding all maximal graphs in wHn(m)(p)(q)(n) with independence number at least k}
Finding all maximal graphs in $\wHn{m}{p}{q}{n}$ with independence number at least $k$ by adding $k$ independent vertices to the $(+K_{q - 1})$-graphs in $\wHn{(m - 1)}{p}{q}{n - k}$.
	
1. Denote by $\mathcal{A}$ the set of all $(+K_{q - 1})$-graphs in $\wHn{(m - 1)}{p}{q}{n - k}$. The obtained maximal graphs in $\wHn{m}{p}{q}{n}$ will be output in $\mathcal{B}$, let $\mathcal{B} = \emptyset$.
	
2. For each graph $H \in \mathcal{A}$:
	
2.1. Find the family $\mathcal{M}(H) = \set{M_1, ..., M_t}$ of all maximal $K_{q - 1}$-free subsets of $\V(H)$.
	
2.2. Consider all the $k$-tuples $(M_{i_1}, M_{i_2}, ..., M_{i_k})$ of elements of $\mathcal{M}(H)$, for which $1 \leq i_1 \leq ... \leq i_k \leq t$ (in these $k$-tuples some subsets $M_i$ can coincide). For every such $k$-tuple construct the graph $G = G(M_{i_1}, M_{i_2}, ..., M_{i_k})$ by adding to $\V(H)$ new independent vertices $v_1, v_2, ..., v_k$, so that $N_G(v_j) = M_{i_j}, j = 1, ..., k$ (see Proposition \ref{proposition: finding all maximal graphs in wHn(m)(p)(q)(n)} (c)). If $\omega(G + e) = q, \forall e \in \E(\overline{G})$, then add $G$ to $\mathcal{B}$.
	
3. Exclude the isomorph copies of graphs from $\mathcal{B}$.
	
4. Exclude from $\mathcal{B}$ all graphs which are not in $\wHn{m}{p}{q}{n}$.
\end{algorithm}

\begin{theorem}
\label{theorem: finding all maximal graphs in wHn(m)(p)(q)(n) with independence number at least k}
Upon completion of Algorithm \ref{algorithm: finding all maximal graphs in wHn(m)(p)(q)(n) with independence number at least k} the obtained set $\mathcal{B}$ is equal to the set of all maximal graphs in $\wHn{m}{p}{q}{n}$ with independence number at least $k$.
\end{theorem}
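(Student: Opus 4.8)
The plan is to prove the two inclusions between $\mathcal{B}$ and the set $\mathcal{S}$ of all maximal graphs in $\wHn{m}{p}{q}{n}$ with independence number at least $k$ separately, so that the assertion $\mathcal{B} = \mathcal{S}$ follows. For the inclusion $\mathcal{B} \subseteq \mathcal{S}$ (soundness), I would take $G \in \mathcal{B}$ after the algorithm terminates and check the three defining properties. Membership $G \in \wHn{m}{p}{q}{n}$ is enforced by Step 4; moreover every $G$ placed in $\mathcal{B}$ is built in Step 2.2 from some $H \in \wHn{(m - 1)}{p}{q}{n - k}$ by adjoining $k$ vertices, so $\abs{\V(G)} = (n - k) + k = n$ holds automatically. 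The new vertices $v_1, \dots, v_k$ are pairwise non-adjacent by construction, hence form an independent set and give $\alpha(G) \geq k$. Finally, a graph is kept in Step 2.2 only if $\omega(G + e) = q$ for every $e \in \E(\overline{G})$; since also $\omega(G) < q$, this says $G + e \notin \wH{m}{p}{q}$ for all such $e$, i.e. $G$ is a $(+K_q)$-graph and therefore a maximal graph in $\wH{m}{p}{q}$ (by the observation following Definition \ref{definition: (+K_t)}).

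For the reverse inclusion $\mathcal{S} \subseteq \mathcal{B}$ (completeness), which is where Proposition \ref{proposition: finding all maximal graphs in wHn(m)(p)(q)(n)} does the work, I would start from an arbitrary maximal graph $G \in \wHn{m}{p}{q}{n}$ with $\alpha(G) \geq k$, choose independent vertices $v_1, \dots, v_k \in \V(G)$, and set $H = G - \set{v_1, \dots, v_k}$. Proposition \ref{proposition: finding all maximal graphs in wHn(m)(p)(q)(n)} then yields $H \in \wHn{(m - 1)}{p}{q}{n - k}$, that $H$ is a $(+K_{q - 1})$-graph, and that each $\N_G(v_i)$ is a maximal $K_{q - 1}$-free subset of $\V(H)$. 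Consequently $H$ lies in the set $\mathcal{A}$ formed in Step 1, and every $\N_G(v_i)$ belongs to the family $\mathcal{M}(H)$ computed in Step 2.1. After relabelling the $v_i$ so that the corresponding indices are non-decreasing, the multiset $\set{\N_G(v_1), \dots, \N_G(v_k)}$ is exactly one of the $k$-tuples $(M_{i_1}, \dots, M_{i_k})$ examined in Step 2.2, and the graph the algorithm constructs from this tuple is isomorphic to $G$ via the identity on $\V(H)$ together with the bijection sending each new vertex to the matching $v_i$. Since $G$ is maximal in $\wH{m}{p}{q}$ and $\omega(G) < q$, we get $\omega(G + e) = q$ for every $e \in \E(\overline{G})$, so this constructed copy passes the test in Step 2.2 and is added to $\mathcal{B}$; it then survives Step 3 (at least one representative of each isomorphism class is retained) and Step 4 (because $G \in \wHn{m}{p}{q}{n}$). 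Hence $\mathcal{B}$ contains a graph isomorphic to $G$, as required.

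I expect the main obstacle to be the bookkeeping in the completeness argument: making precise that the enumeration in Step 2.2 of non-decreasing $k$-tuples of maximal $K_{q - 1}$-free subsets of $\V(H)$ really captures, up to isomorphism, \emph{every} way of attaching $k$ independent vertices to $H$, and that re-assembling $H$ with a prescribed list of neighbourhoods reproduces a graph isomorphic to the original $G$ and not merely one sharing the same attachment data (the key point being that $G$ restricted to $\V(H)$ is $H$ itself, by the choice $H = G - \set{v_1, \dots, v_k}$, and that the $v_i$ are mutually non-adjacent). A small auxiliary fact used in both directions, which I would record separately, is the elementary observation that adding one edge to a graph with clique number smaller than $q$ yields clique number at most $q$; this is what makes the computable test $\omega(G + e) = q$ equivalent to the $(+K_q)$-condition on graphs in $\wH{m}{p}{q}$, and hence to maximality. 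Beyond these points the verification is routine, since Steps 3 and 4 only remove isomorphic duplicates and graphs failing membership in $\wHn{m}{p}{q}{n}$, neither of which can discard a genuine element of $\mathcal{S}$.
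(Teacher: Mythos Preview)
Your proposal is correct and follows essentially the same approach as the paper: both prove the two inclusions separately, using Step~4 and Step~2.2 for soundness and Proposition~\ref{proposition: finding all maximal graphs in wHn(m)(p)(q)(n)} for completeness. Your version is considerably more detailed (explicitly handling the isomorphism bookkeeping, the non-decreasing tuple enumeration, and the equivalence of the $\omega(G+e)=q$ test with the $(+K_q)$-condition), whereas the paper's proof is a brief sketch, but the underlying argument is the same.
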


\begin{proof}
From step 4 we see that $\mathcal{B} \subseteq \wHn{m}{p}{q}{n}$ and from step 2.2 it becomes clear, that $\mathcal{B}$ contains only maximal graphs in $\wHn{m}{p}{q}{n}$ with independence number at least $k$. Let $G$ be an arbitrary maximal graph in $\wHn{m}{p}{q}{n}$ with independence number in $k$. We will prove that $G \in \mathcal{B}$. Let $v_1, ..., v_k$ be independent vertices of $G$ and $H = G - \set{v_1, ..., v_k}$. According to Proposition \ref{proposition: finding all maximal graphs in wHn(m)(p)(q)(n)}(a) and (b), $H \in \wHn{(m - 1)}{p}{q}{n - k}$ and $H$ is a $(+K_{q - 1})$-graph. Therefore in step 1 we have $H \in \mathcal{A}$. According to Proposition \ref{proposition: finding all maximal graphs in wHn(m)(p)(q)(n)}(c), $\N_{G}(v_i) \in \mathcal{M}(H)$ for all $i \in \set{1, ..., k}$, hence in step 2 $G$ is added to $\mathcal{B}$.
\end{proof}

Let us note that if $G \in \wHn{m}{p}{q}{n}$ and $n \geq q$, then $G \neq K_n$ and therefore $\alpha(G) \geq 2$. In this case, with the help of Algorithm \ref{algorithm: finding all maximal graphs in wHn(m)(p)(q)(n) with independence number at least k} we can obtain all maximal graphs in $\wHn{m}{p}{q}{n}$ by adding to independent vertices to the $(+K_{q - 1})$-graphs in $\wHn{(m - 1)}{p}{q}{n - 2}$. 

It is clear that if $G$ is a graph for which $\alpha(G) = 2$ and $H$ is a subgraph of $G$ obtained by removing independent vertices, then $\alpha(H) \leq 2$. We modify Algorithm \ref{algorithm: finding all maximal graphs in wHn(m)(p)(q)(n) with independence number at least k} in the following way to obtain the maximal graphs in $\wHn{m}{p}{q}{n}$ with independence number 2:
\begin{algorithm}
\label{algorithm: finding all maximal graphs in wHn(m)(p)(q)(n) with independence number equal to 2}
A modification of Algorithm \ref{algorithm: finding all maximal graphs in wHn(m)(p)(q)(n) with independence number at least k} for finding all maximal graphs in $\wHn{m}{p}{q}{n}$ with independence number 2 by adding $2$ independent vertices to the $(+K_{q - 1})$-graphs in $\wHn{(m - 1)}{p}{q}{n - 2}$ with independence number not greater than 2.\\
	
In step 1 of Algorithm \ref{algorithm: finding all maximal graphs in wHn(m)(p)(q)(n) with independence number at least k} we add the condition that the set $\mathcal{A}$ contains only the $(+K_{q - 1})$-graphs $\wHn{(m - 1)}{p}{q}{n - k}$ with independence number not greater than 2, and at the end of step 2.2 after the condition $\omega(G + e) = q, \forall e \in \E(\overline{G})$ we also add the condition $\alpha(G) = 2$.  
\end{algorithm}

Thus, finding all maximal graphs in $\wHn{m}{p}{q}{n}$ with independence number 2 is reduced to finding all $(+K_{q - 1})$-graphs with independence number not greater than 2 in $\wHn{m - 1}{p}{q}{n - 2}$ and finding the remaining maximal graphs in $\wHn{m}{p}{q}{n}$ with independence number greater than or equal to 3 is reduced to finding all $(+K_{q - 1})$-graphs in $\wHn{m - 1}{p}{q}{n - 3}$. In this way we can obtain all maximal graphs in $\wHn{m}{p}{q}{n}$ in steps, starting from graphs with a small number of vertices.\\

The \emph{nauty} programs \cite{McK90} have an important role in this work. We use them for fast generation of non-isomorphic graphs and for graph isomorph rejection.

\section{Computation of the number $\wFv{8}{6}{7}$}

\begin{figure}
	\centering
	\begin{subfigure}[b]{0.5\textwidth}
		\centering
		\includegraphics[height=240px,width=120px]{./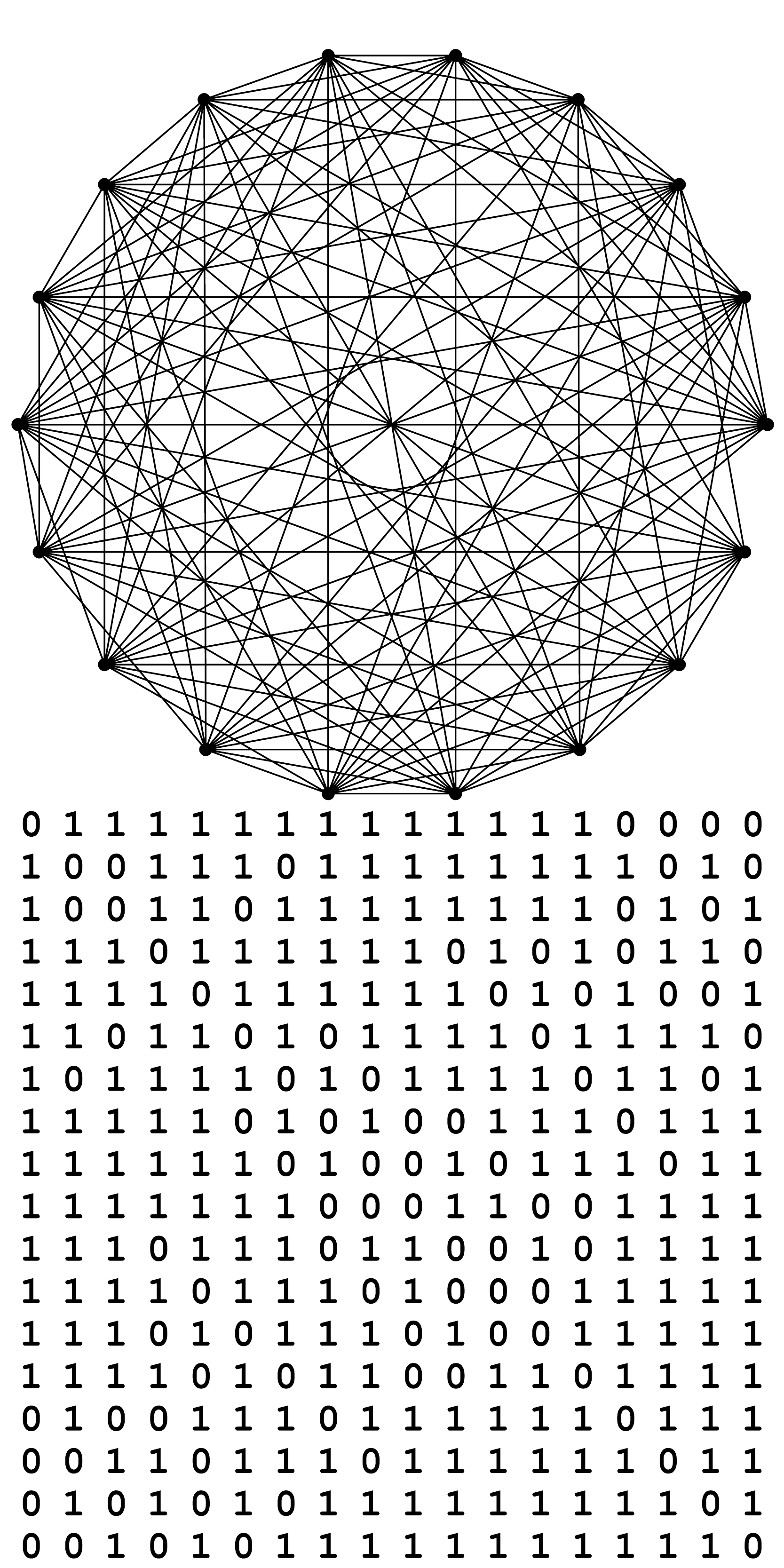}
		\caption*{\emph{$\Gamma_1$}}
		\label{figure: Gamma_1}
	\end{subfigure}
	\caption{Graph $\Gamma_1 \in \wHn{8}{6}{7}{18}$}
	\label{figure: wHn(8)(6)(7)(18)}
\end{figure}

From Theorem \ref{theorem: m_0} it becomes clear that in order to compute the numbers $\wFv{m}{6}{m - 1}$ we need the exact value of the number $m_0(6)$. According to Theorem \ref{theorem: m_0} (c), to obtain an upper bound for this number we need to know $\wFv{8}{6}{7}$. In this section we compute this number by proving the following
\begin{theorem}
\label{theorem: wFv(8)(6)(7) = 18}
$\wFv{8}{6}{7} = 18$.
\end{theorem}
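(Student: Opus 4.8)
I would prove $\wFv{8}{6}{7}\le 18$ and $\wFv{8}{6}{7}\ge 18$ separately. The upper bound already follows from $\wFv{m}{6}{m-1}\le m+10$ of \cite{BN15}; concretely it is witnessed by the graph $\Gamma_1$ of Figure~\ref{figure: wHn(8)(6)(7)(18)}, so it suffices to check $\omega(\Gamma_1)\le 6$ and $\Gamma_1\arrowsv\uni{8}{6}$. For the latter I would first reduce the (a priori infinite) list of tuples to be tested to the two coarsest ones, $(6,3)$ and $(5,4)$: whenever $b_1,b_2\ge 2$ and $b_1+b_2-1=a$, one has $G\arrowsv(a,a_2,\dots,a_s)\Rightarrow G\arrowsv(b_1,b_2,a_2,\dots,a_s)$, because merging the first two colour classes of a $(b_1,b_2,a_2,\dots,a_s)$-free colouring gives an $(a,a_2,\dots,a_s)$-free colouring — a monochromatic $a$-clique split into parts of sizes $x+y=a$ would need $x\le b_1-1$ and $y\le b_2-1$, forcing the contradiction $a\le b_1+b_2-2=a-1$. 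Since every $(a_1,\dots,a_s)$ with $\sum_i(a_i-1)+1=8$ and $\max_i a_i\le 6$ is obtained from $(6,3)$ or $(5,4)$ by repeatedly splitting parts this way (iteratively merging the two smallest parts of a longer tuple preserves $\max_i a_i\le 6$ and terminates at one of these two), it is enough to verify $\Gamma_1\arrowsv(6,3)$ and $\Gamma_1\arrowsv(5,4)$, which I would do by a backtracking search for a colouring avoiding the prescribed cliques; together with $\omega(\Gamma_1)\le 6$ this gives $\wFv{8}{6}{7}\le 18$.

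For the lower bound, $\wFv{8}{6}{7}\ge 17$ is already known (\cite{BN15}), so the content is to prove $\wHn{8}{6}{7}{17}=\emptyset$. Since every graph in that family is a spanning subgraph of a maximal one, it suffices to rule out a maximal graph $G\in\wHn{8}{6}{7}{17}$, i.e. a $(+K_7)$-graph in it. Such a $G$ has $\alpha(G)\ge 2$ (else $G=K_{17}$, which has $\omega=17$). If $\alpha(G)=2$, then by Algorithm~\ref{algorithm: finding all maximal graphs in wHn(m)(p)(q)(n) with independence number equal to 2} the graph $G$ is obtained from some $(+K_6)$-graph $H\in\wHn{7}{6}{7}{15}$ with $\alpha(H)\le 2$ by attaching two independent vertices $v_1,v_2$ with each $\N_G(v_i)$ a maximal $K_6$-free subset of $\V(H)$; if $\alpha(G)\ge 3$, then by Algorithm~\ref{algorithm: finding all maximal graphs in wHn(m)(p)(q)(n) with independence number at least k} (with $k=3$) it is obtained in the same way from a $(+K_6)$-graph $H\in\wHn{7}{6}{7}{14}$ by attaching three independent vertices. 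Thus the proof reduces to enumerating all $(+K_6)$-graphs in $\wHn{7}{6}{7}{14}$ and all $(+K_6)$-graphs with $\alpha\le 2$ in $\wHn{7}{6}{7}{15}$, performing the vertex-attachment step of these algorithms, and checking that no resulting graph lies in $\wHn{8}{6}{7}{17}$ (the test $\arrowsv\uni{8}{6}$ again reducing to $\arrowsv(6,3)$ and $\arrowsv(5,4)$); any surviving graph would contradict the check, so $\wHn{8}{6}{7}{17}=\emptyset$ follows.

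The needed families of $(+K_6)$-graphs are themselves produced by Algorithms~\ref{algorithm: finding all maximal graphs in wHn(m)(p)(q)(n) with independence number at least k} and~\ref{algorithm: finding all maximal graphs in wHn(m)(p)(q)(n) with independence number equal to 2} applied recursively over decreasing $m$ with $q=7$ fixed: at each level the maximal graphs on a given number of vertices are computed, all graphs of the family arise from them by edge deletions, and the $(+K_6)$-graphs needed at the next level are extracted. The recursion terminates because a $(+K_6)$-graph on at most five vertices must be $K_n$ and because $\wFv{m}{6}{7}=m$ for $m\le 6$ and $\wFv{7}{6}{7}=13$ (Theorem~\ref{theorem: F_v(a_1, ..., a_s; m) = m + p}) bound how far down one must go; the \emph{nauty} programs \cite{McK90} handle generation of non-isomorphic graphs and isomorph rejection. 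The main obstacle is the size of this computation — the number of intermediate $(+K_6)$-graphs grows quickly with the vertex count — so feasibility rests on aggressive pruning (rejecting a graph as soon as $\omega\ge 7$, or it fails to be a $(+K_6)$-graph, or, in the $\alpha(G)=2$ branch, its independence number exceeds $2$) and on keeping each membership test for $\arrowsv\uni{m}{6}$ cheap via the reduction to the few coarsest tuples. Once the search is completed without producing a graph in $\wHn{8}{6}{7}{17}$, we obtain $\wFv{8}{6}{7}\ge 18$, and with $\Gamma_1$ this yields $\wFv{8}{6}{7}=18$.
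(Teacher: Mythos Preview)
Your proposal is correct and follows essentially the same route as the paper: the upper bound via the graph $\Gamma_1$ from \cite{BN15}, and the lower bound by a computer search showing $\wHn{8}{6}{7}{17}=\emptyset$, split into the cases $\alpha(G)=2$ and $\alpha(G)\ge 3$ and carried out recursively with Algorithms~\ref{algorithm: finding all maximal graphs in wHn(m)(p)(q)(n) with independence number at least k} and~\ref{algorithm: finding all maximal graphs in wHn(m)(p)(q)(n) with independence number equal to 2} down to the same explicit base cases. Your explicit reduction of the test $G\arrowsv\uni{8}{6}$ to the two tuples $(6,3)$ and $(5,4)$ is a useful implementation detail that the paper does not spell out, but it does not change the overall strategy.
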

\begin{proof}
The inequality $\wFv{8}{6}{7} \leq 18$ is proved in \cite{BN15} with the help of the graph $\Gamma_1$ which is given on Figure \ref{figure: wHn(8)(6)(7)(18)}(see the proof of Theorem 1.10 in version 1 or the proof of Theorem 1.9 in version 2). To obtain the lower bound we will prove with the help of a computer that $\wHn{8}{6}{7}{17} = \emptyset$.

First, we search for maximal graphs in $\wHn{8}{6}{7}{17}$ with independence number greater than 2. It is clear that $K_6$ and $K_6 - e$ are the only $(+K_6)$-graphs in $\wHn{3}{6}{7}{6}$. With the help of Algorithm \ref{algorithm: finding all maximal graphs in wHn(m)(p)(q)(n) with independence number at least k} we add 2 independent vertices to these graphs to find all maximal graphs in $\wHn{4}{6}{7}{8}$. By removing edges from them we find all $(+K_6)$-graphs in $\wHn{4}{6}{7}{8}$. In the same way, we successively obtain all maximal and all $(+K_6)$-graphs in the sets:\\
$\wHn{5}{6}{7}{10}$,  $\wHn{6}{6}{7}{12}$, $\wHn{7}{6}{7}{14}$.\\
In the end, with the help of Algorithm \ref{algorithm: finding all maximal graphs in wHn(m)(p)(q)(n) with independence number at least k} we add 3 independent vertices to the obtained $(+K_6)$-graphs in $\wHn{7}{6}{7}{14}$ to find all maximal graphs in $\wHn{8}{6}{7}{17}$ with independence number greater than 2.

After that, we search for maximal graphs in $\wHn{8}{6}{7}{17}$ with independence number 2. It is clear that $K_5$ is the only $(+K_6)$-graph in $\wHn{2}{6}{7}{5}$. With the help of Algorithm \ref{algorithm: finding all maximal graphs in wHn(m)(p)(q)(n) with independence number equal to 2} we add 2 independent vertices this graph to find all maximal graphs in $\wHn{3}{6}{7}{7}$ with independence number 2. By removing edges from them we find all $(+K_6)$-graphs in $\wHn{3}{6}{7}{7}$ with independence number 2. In the same way, we successively obtain all maximal and all $(+K_6)$-graphs with independence number 2 in the sets:\\
$\wHn{4}{6}{7}{9}$, $\wHn{5}{6}{7}{11}$, $\wHn{6}{6}{7}{13}$, $\wHn{7}{6}{7}{15}$ and $\wHn{8}{6}{7}{17}$.

The number of graphs found in each step is described in Table 1 in \cite{}. In both cases we do not obtain any maximal graphs in $\wHn{8}{6}{7}{17}$, therefore $\wHn{8}{6}{7}{17} = \emptyset$.
\end{proof}

\begin{corollary}
\label{corollary: 8 leq m_0(6) leq 11}
$8 \leq m_0(6) \leq 11$
\end{corollary}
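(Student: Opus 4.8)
The plan is to combine the general bounds on $\wFv{m}{6}{m-1}$ with the definition of $m_0(6)$ and with the value $\wFv{8}{6}{7} = 18$ just established. For the upper bound, Theorem \ref{theorem: m_0}(c) with $p = 6$ gives immediately
\begin{equation*}
m_0(6) < \wFv{(p+2)}{p}{p+1} - p = \wFv{8}{6}{7} - 6 = 18 - 6 = 12,
\end{equation*}
so $m_0(6) \leq 11$. This step is entirely routine — it is just substituting $p = 6$ and the newly computed number into the inequality of Theorem \ref{theorem: m_0}(c).

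For the lower bound $m_0(6) \geq 8$, I would argue from the definition of $m_0(6)$ as the least $m$ achieving the minimum of $\wFv{m}{6}{m-1} - m$ over $m \geq p+2 = 8$. Since the range of admissible $m$ starts at $8$, the number $m_0(6)$ is automatically at least $8$; there is nothing to prove here beyond noting that $\wFv{m}{6}{m-1}$ only exists for $m \geq p+2 = 8$ by \eqref{equation: wFv(m)(p)(m - 1) exists ...}, so the minimum is taken over $m \in \{8, 9, 10, \dots\}$ and hence $m_0(6) \geq 8$ trivially.

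I do not anticipate a genuine obstacle in this corollary: both inequalities are one-line consequences of results already in hand. The only point requiring a moment's care is making sure the constant in Theorem \ref{theorem: m_0}(c) is applied with the correct value of $p$ (namely $p = 6$, so that $\wFv{(p+2)}{p}{p+1} = \wFv{8}{6}{7}$), and recalling that strict inequality there yields $m_0(6) \leq 11$ rather than $m_0(6) \leq 12$. The substantive work — computing $\wFv{8}{6}{7} = 18$ — has already been done in Theorem \ref{theorem: wFv(8)(6)(7) = 18}, and the remaining gap between the bounds $8$ and $11$ is what the subsequent sections of the paper will presumably close in order to pin down $m_0(6)$ exactly and thereby prove the Main Theorem.
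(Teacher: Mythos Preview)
Your proposal is correct and follows exactly the paper's own argument: the lower bound $m_0(6)\geq 8$ comes straight from the definition of $m_0$ (the minimum is over $m\geq p+2=8$), and the upper bound is Theorem~\ref{theorem: m_0}(c) with $p=6$ together with $\wFv{8}{6}{7}=18$. There is nothing to add.
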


\begin{proof}
The inequality $m_0(6) \geq 8$ follows from the definition of $m_0$ and the upper bound follows from Theorem \ref{theorem: m_0} (c), $p = 6$.
\end{proof}

\section{Proof of the Main Theorem}

Since $\wFv{8}{6}{7} = 18$, according to Theorem \ref{theorem: m_0} (a) it is enough to prove $m_0(6) = 8$. According to Corollary \ref{corollary: 8 leq m_0(6) leq 11} this equality will be proved if we prove $\wFv{9}{6}{8} > 18$, $\wFv{10}{6}{9} > 19$ and $\wFv{11}{6}{10} > 20$. The proof of these inequalities is similar to the proof of $\wFv{8}{6}{7} > 17$ from Theorem \ref{theorem: wFv(8)(6)(7) = 18}. It is clear that it is enough to prove $\wHn{m}{6}{m - 1}{m + 9} = \emptyset$ for $m = 9, 10, 11$.

First, we search for maximal graphs in $\wHn{m}{6}{m - 1}{m + 9}$ with independence number greater than 2. It is clear that $K_{m - 2}$ and $K_{m - 2} - e$ are the only $(+K_{m - 2})$-graphs in $\wHn{(m - 5)}{6}{m - 1}{m - 2}$. With the help of Algorithm \ref{algorithm: finding all maximal graphs in wHn(m)(p)(q)(n) with independence number at least k} we successively obtain all maximal and all $(+K_{m - 2})$-graphs in the sets:\\
$\wHn{(m - 4)}{6}{m - 1}{m}$\\
$\wHn{(m - 3)}{6}{m - 1}{m + 2}$\\
$\wHn{(m - 2)}{6}{m - 1}{m + 4}$\\
$\wHn{(m - 1)}{6}{m - 1}{m + 6}$\\
In the end, with the help of Algorithm \ref{algorithm: finding all maximal graphs in wHn(m)(p)(q)(n) with independence number at least k} we add 3 independent vertices to the obtained $(+K_{m - 2})$-graphs in $\wHn{(m - 1)}{6}{m - 1}{m + 6}$ to find all maximal graphs in $\wHn{m}{6}{m - 1}{m + 9}$ with independence number greater than 2.

After that, we search for maximal graphs in $\wHn{m}{6}{m - 1}{m + 9}$ with independence number 2. It is clear that $K_{m - 3}$ is the only $(+K_{m - 2})$-graph in $\wHn{(m - 6)}{6}{m - 1}{m - 3}$. With the help of Algorithm \ref{algorithm: finding all maximal graphs in wHn(m)(p)(q)(n) with independence number equal to 2} we successively obtain all maximal and all $(+K_{m - 2})$-graphs with independence number 2 in the sets:\\
$\wHn{(m - 5)}{6}{m - 1}{m - 1}$\\
$\wHn{(m - 4)}{6}{m - 1}{m + 1}$\\
$\wHn{(m - 3)}{6}{m - 1}{m + 3}$\\
$\wHn{(m - 2)}{6}{m - 1}{m + 5}$\\
$\wHn{(m - 1)}{6}{m - 1}{m + 7}$\\
$\wHn{m}{6}{m - 1}{m + 9}$.

The number of graphs found in each step is given in Table 2, Table 3 and Table 4 in \cite{}. In both cases we do not obtain any maximal graphs in the sets $\wHn{m}{6}{m - 1}{m + 9}$, $m = 9, 10, 11$, hence it follows $\wFv{9}{6}{8} > 18$, $\wFv{10}{6}{9} > 19$, $\wFv{11}{6}{10} > 20$ and $m_0(6) = 8$. Thus we finish the proof of the Main Theorem.


\bibliographystyle{plain}

\bibliography{main}


\author{Aleksandar Bikov}

\email{asbikov@fmi.uni-sofia.bg}, Corresponding author

\author{Nedyalko Nenov}

\email{nenov@fmi.uni-sofia.bg}

\smallskip

\address{
Faculty of Mathematics and Informatics

Sofia University "St. Kliment Ohridski

5, James Bourchier Blvd.

1164 Sofia, Bulgaria
}


\clearpage

\appendix

\section{Results of the computations}

\begin{table}[h]
	\centering
	\begin{tabular}{| p{3cm} | p{2cm} | p{2cm} | p{2cm} |}
		\hline
		set						& independence number	& maximal graphs 		& $(+K_6)$-graphs	\\
		\hline
		$\wHn{3}{6}{7}{6}$		& -						& 						& 2					\\
		$\wHn{4}{6}{7}{8}$		& -						& 2						& 13				\\
		$\wHn{5}{6}{7}{10}$		& -						& 8						& 324				\\
		$\wHn{6}{6}{7}{12}$		& -						& 56					& 104 271			\\
		$\wHn{7}{6}{7}{14}$		& -						& 18					& 1825				\\
		$\wHn{8}{6}{7}{17}$		& $\geq$ 3				& 0						&					\\
		\hline
		$\wHn{2}{6}{7}{5}$		& $\leq$ 2				& 						& 1					\\
		$\wHn{3}{6}{7}{7}$		& = 2					& 1						& 3					\\
		$\wHn{4}{6}{7}{9}$		& = 2					& 2						& 22				\\
		$\wHn{5}{6}{7}{11}$		& = 2					& 5						& 468				\\
		$\wHn{6}{6}{7}{13}$		& = 2					& 24					& 97 028			\\
		$\wHn{7}{6}{7}{15}$		& = 2					& 468					& 2 395 573			\\
		$\wHn{8}{6}{7}{17}$		& = 2					& 0						&					\\
		\hline
		$\wHn{8}{6}{7}{17}$		& -						& 0						&					\\
		\hline
	\end{tabular}
	\caption{Steps in the search of all maximal graphs in $\wHn{8}{6}{7}{17}$}
	\label{table: finding all graphs in wHn(8)(6)(7)(17)}
	
	\vspace{2em}
	
	\centering
	\begin{tabular}{| p{3cm} | p{2cm} | p{2cm} | p{2cm} |}
		\hline
		set						& independence number	& maximal graphs 		& $(+K_7)$-graphs	\\
		\hline
		$\wHn{4}{6}{8}{7}$		& -						& 						& 2					\\
		$\wHn{5}{6}{8}{9}$		& -						& 2						& 13				\\
		$\wHn{6}{6}{8}{11}$		& -						& 8						& 326				\\
		$\wHn{7}{6}{8}{13}$		& -						& 56					& 105 125			\\
		$\wHn{8}{6}{8}{15}$		& -						& 20					& 1844				\\
		$\wHn{9}{6}{8}{18}$		& $\geq$ 3				& 0						&					\\
		\hline
		$\wHn{3}{6}{8}{6}$		& $\leq$ 2				& 						& 1					\\
		$\wHn{4}{6}{8}{8}$		& = 2					& 1						& 3					\\
		$\wHn{5}{6}{8}{10}$		& = 2					& 2						& 22				\\
		$\wHn{6}{6}{8}{12}$		& = 2					& 5						& 489				\\
		$\wHn{7}{6}{8}{14}$		& = 2					& 25					& 119 124			\\
		$\wHn{8}{6}{8}{16}$		& = 2					& 506					& 2 747 120			\\
		$\wHn{9}{6}{8}{18}$		& = 2					& 0						&					\\
		\hline
		$\wHn{9}{6}{8}{18}$		& -						& 0						&					\\
		\hline
	\end{tabular}
	\caption{Steps in the search of all maximal graphs in $\wHn{9}{6}{8}{18}$}
	\label{table: finding all graphs in wHn(9)(6)(8)(18)}
\end{table}

\clearpage

\begin{table}[h]
	
	\vspace{3em}

	\centering
	\begin{tabular}{| p{3cm} | p{2cm} | p{2cm} | p{2cm} |}
		\hline
		set						& independence number	& maximal graphs 		& $(+K_8)$-graphs	\\
		\hline
		$\wHn{5}{6}{9}{8}$		& -						& 						& 2					\\
		$\wHn{6}{6}{9}{10}$		& -						& 2						& 13				\\
		$\wHn{7}{6}{9}{12}$		& -						& 8						& 327				\\
		$\wHn{8}{6}{9}{14}$		& -						& 56					& 105 281			\\
		$\wHn{9}{6}{9}{16}$		& -						& 20					& 1845				\\
		$\wHn{10}{6}{9}{19}$	& $\geq$ 3				& 0						&					\\
		\hline
		$\wHn{4}{6}{9}{7}$		& $\leq$ 2				& 						& 1					\\
		$\wHn{5}{6}{9}{9}$		& = 2					& 1						& 3					\\
		$\wHn{6}{6}{9}{11}$		& = 2					& 2						& 22				\\
		$\wHn{7}{6}{9}{13}$		& = 2					& 5						& 496				\\
		$\wHn{8}{6}{9}{15}$		& = 2					& 25					& 121 498			\\
		$\wHn{9}{6}{9}{17}$		& = 2					& 509					& 2 749 155			\\
		$\wHn{10}{6}{9}{19}$	& = 2					& 0						&					\\
		\hline
		$\wHn{10}{6}{9}{19}$	& -						& 0						&					\\
		\hline
	\end{tabular}
	\caption{Steps in the search of all maximal graphs in $\wHn{10}{6}{9}{19}$}
	\label{table: finding all graphs in wHn(10)(6)(9)(19)}
	
	\vspace{2em}
	
	\centering
	\begin{tabular}{| p{3cm} | p{2cm} | p{2cm} | p{2cm} |}
		\hline
		set						& independence number	& maximal graphs 		& $(+K_9)$-graphs	\\
		\hline
		$\wHn{6}{6}{10}{9}$		& -						& 						& 2					\\
		$\wHn{7}{6}{10}{11}$	& -						& 2						& 13				\\
		$\wHn{8}{6}{10}{13}$	& -						& 8						& 327				\\
		$\wHn{9}{6}{10}{15}$	& -						& 56					& 105 314			\\
		$\wHn{10}{6}{10}{17}$	& -						& 20					& 1845				\\
		$\wHn{11}{6}{10}{20}$	& $\geq$ 3				& 0						&					\\
		\hline
		$\wHn{5}{6}{10}{8}$		& $\leq$ 2				& 						& 1					\\
		$\wHn{6}{6}{10}{10}$	& = 2					& 1						& 3					\\
		$\wHn{7}{6}{10}{12}$	& = 2					& 2						& 22				\\
		$\wHn{8}{6}{10}{14}$	& = 2					& 5						& 498				\\
		$\wHn{9}{6}{10}{16}$	& = 2					& 25					& 121 863			\\
		$\wHn{10}{6}{10}{18}$	& = 2					& 509					& 2 749 171			\\
		$\wHn{11}{6}{10}{20}$	& = 2					& 0						&					\\
		\hline
		$\wHn{11}{6}{10}{20}$	& -						& 0						&					\\
		\hline
	\end{tabular}
	\caption{Steps in the search of all maximal graphs in $\wHn{11}{6}{10}{20}$}
	\label{table: finding all graphs in wHn(11)(6)(10)(20)}
\end{table}


\end{document}